\renewcommand{\epsilon}{\varepsilon}
\renewcommand{\ell}{x}
\newtheorem{thm}{Theorem}\numberwithin{thm}{section}
\newtheorem{lem}[thm]{Lemma}
\begin{document}
\begin{center}
\huge{Diophantine equations involving powers of factorials}\\[1cm] 
\end{center}
\begin{center}

\large{Sa$\mathrm{\check{s}}$a Novakovi$\mathrm{\acute{c}}$}\\[0,5cm]
{\small September 2025}\\[0,5cm]
\end{center}
{\small \textbf{Abstract}. 
We are motivated by a result of Alzer and Luca who presented all the integer solutions to the relations $(k!)^n-k^n=(n!)^k-n^k$ and $(k!)^n+k^n=(n!)^k+n^k$. We consider the equations $(k!)^{n!}\pm k^n=(n!)^{k!}\pm n^k$ and $(k!)^n\pm k^{n!}=(n!)^k\pm n^{k!}$ and prove a similar statement.}


\begin{center}
\tableofcontents
\end{center}
\section{Introduction}
The theory of Diophantine equations has a long and rich history and has attracted the attention of many mathematicians. In particular, the study of diophantine equations involving factorials have been studied extensively. For example Brocard \cite{BR}, and independently Ramanujan \cite{RA}, asked to find all integer solutions for $n!=x^2-1$. It is still an open problem, known as Brocard's problem, and it is believed that the equation has only three solutions $(x,n)=(5,4), (11,5)$ and $(71,7)$. Overholt \cite{O} observed that a weak form of Szpiro's-conjecture implies that Brocard's equation has finitely many integer solutions. Some further examples of similar equations are:
\begin{itemize}
	\item[1)] $n!=x^k\pm y^k$ and $n!\pm m!=x^k$, see \cite{EO}.
	\item[2)] $\phi(x)=n!$, where $\phi$ is the Euler totient function \cite{FL}.
	\item[3)] $p(x)=m!$, where $p(x)\in\mathbb{Z}[x]$ \cite{L}.
	\item[4)] $\alpha\,m_1!_{S_1}\cdots m_r!_{S_r}=f(n!)$, where $f$ is an arithmetic function and $m_i!_{S_i}$ are certain Bhargava factorials \cite{BN}.
\end{itemize}
For the equations 1) and 4), it was shown that the number of integer solutions is finite. The equation in 3) has finitely many integer solutions, provided the ABC-conjecture holds, and 2) does have infinitely many solutions. There are a lot of more diophantine equations involving factorials and ploynomials that have been studied and we refer the interested reader to \cite{BN}, \cite{NO} and the references therein. For example, Alzer and Luca \cite{AL} considered the equations  $(k!)^n-k^n=(n!)^k-n^k$ and $(k!)^n+k^n=(n!)^k+n^k$ and presented all the integer solutions. Their results and methods motivated us to study the following diophantine relations:
$$
(k!)^{n!}\pm k^n=(n!)^{k!}\pm n^k \quad \textnormal{and} \quad (k!)^n\pm k^{n!}=(n!)^k\pm n^{k!}.
$$
Our results are the following theorems.

\begin{thm}
	Let $n$ and $k$ be positive integers. The equation
	$$
	(k!)^{n!}-k^n=(n!)^{k!}-n^k
	$$
	holds if and only if $k=n$ or $(k,n)=(1,2), (2,1)$.
\end{thm}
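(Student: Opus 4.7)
The plan is to handle the ``if'' direction by direct substitution and then reduce the ``only if'' direction to a size comparison between the two dominant terms $(k!)^{n!}$ and $(n!)^{k!}$. For the ``if'' direction, substituting $k=n$ gives identical sides; for $(k,n)=(1,2)$ both sides evaluate to $0$, and the case $(2,1)$ follows by symmetry.

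For the converse, I would first exploit the fact that the equation is invariant under $k\leftrightarrow n$ to assume $k\le n$, so that only $k<n$ needs treatment. I would then dispose of the two small values of $k$ by elementary computation: if $k=1$ the equation collapses to $n!-n=0$, forcing $n=2$; if $k=2$ the equation reads $2^{n!}-2^n=(n!)^2-n^2$, which fails at $n=3$ by direct check and fails for $n\ge 4$ because $2^{n!}\ge 2(n!)^2$ already dwarfs the right side.

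The main case is $k\ge 3$, $n\ge k+1$, where the goal is to prove
$$
(k!)^{n!}-(n!)^{k!}>k^n-n^k>0
$$
(the right inequality holds because $\log x/x$ is decreasing on $[e,\infty)$, so $k\ge 3$ and $n>k$ force $k^n>n^k$). Setting $m:=n!/k!=(k+1)(k+2)\cdots n$, a short algebraic manipulation yields the identity
$$
\frac{(k!)^{n!}}{(n!)^{k!}}=\left(\frac{(k!)^{m-1}}{m}\right)^{k!}.
$$
The function $f(m):=(k!)^{m-1}/m$ is increasing in $m$ (its logarithmic derivative $\log k!-1/m$ is positive since $k!\ge 6$), so over the allowed range $m\ge k+1$ one has $f(m)\ge(k!)^k/(k+1)\ge 6^3/4=54$. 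Hence $(k!)^{n!}\ge 54^{k!}(n!)^{k!}$, which yields the generous bound $(k!)^{n!}-(n!)^{k!}\ge(n!)^{k!}$. A Stirling-type estimate $n!\ge(n/e)^n$ then gives $(n!)^{k!}\ge(n/e)^{n\cdot k!}\ge k^n$ for $k\ge 3$, $n\ge 4$, since $(k!-1)\log(n/e)>1$ in this range. Chaining these inequalities closes the argument.

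The main obstacle will be making the quantitative comparison of $(k!)^{n!}$ and $(n!)^{k!}$ uniform in both variables; the identity above is what makes the analysis manageable, reducing everything to monitoring the one-parameter function $(k!)^{m-1}/m$ whose minimum over the parameter region is pinned at a single boundary point. The secondary comparison $(n!)^{k!}\ge k^n$ is most delicate when $n\gg k$, but Stirling's estimate handles it cleanly.
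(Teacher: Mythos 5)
Your argument is correct, and it takes a genuinely different route from the paper. The paper handles the main case $3\le k<n$ by proving the stronger inequality $(k!)^{n!}>(n!)^{k!}+k^n$ via a double induction: first an induction on $k$ (its Lemma 2.2, $k^{(k+1)!}>(k+1)!^k+(k+1)^{k!}$) to seed the base case $n=k+1$, then an induction on $n$ for fixed $k$ using binomial expansions; the case $k=2$ is likewise settled by an inductive lemma ($2^{n!}-2^n>(n!)^2$ for $n>2$). You replace all of this with direct estimates: the exact identity $\frac{(k!)^{n!}}{(n!)^{k!}}=\bigl((k!)^{m-1}/m\bigr)^{k!}$ with $m=n!/k!\ge k+1$, monotonicity of $(k!)^{m-1}/m$ in $m$, the bound $n!\ge (n/e)^n$, and monotonicity of $\log x/x$ to get $k^n>n^k$; chaining these gives $(k!)^{n!}-(n!)^{k!}\ge (n!)^{k!}\ge n^n>k^n>k^n-n^k$, which rules out solutions, and your check of the condition $(k!-1)\log(n/e)>1$ for $k\ge 3$, $n\ge 4$ is exactly what is needed since it is equivalent to $k!\log(n/e)\ge\log n$. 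Your approach is shorter, avoids induction entirely, and yields an explicit quantitative gap ($(k!)^{n!}\ge 54^{k!}(n!)^{k!}$), at the price of invoking mildly analytic facts (Stirling-type bound, logarithmic derivatives), whereas the paper's proof is purely elementary integer manipulation but longer and structurally more delicate. Two small points should be written out in a final version: in the case $k=2$, $n\ge 4$, the bound $2^{n!}\ge 2(n!)^2$ finishes only after noting $(n!)^2>2^n$, so that $2^{n!}-2^n\ge 2(n!)^2-2^n>(n!)^2>(n!)^2-n^2$; and the claim $(k!)^k/(k+1)\ge 54$ for all $k\ge 3$ needs a one-line justification that the minimum occurs at $k=3$ (e.g.\ $(k!)^k\ge 6^{k-1}(k+1)\ge 54(k+1)$ for $k\ge 4$). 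Both are routine and do not affect the soundness of the plan.
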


\begin{thm}
	Let $n$ and $k$ be positive integers. The equation
	$$
	(k!)^{n!}+k^n=(n!)^{k!}+n^k
	$$
	holds if and only if $k=n$.
\end{thm}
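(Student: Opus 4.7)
The ``if'' direction is immediate, since $k=n$ makes the two sides identical. For the converse, note that interchanging $k$ and $n$ simply swaps the two sides of the equation, so without loss of generality I may assume $k\le n$ and it remains to rule out $k<n$. If $k=1$, the equation collapses to $1+1=n!+n$, forcing $n=1$ and contradicting $k<n$, so from now on $2\le k<n$.

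The substantive case is $2\le k<n$. Here the plan is to establish the strict inequality $(k!)^{n!}+k^n>(n!)^{k!}+n^k$, which would follow from two estimates: (a) $(k!)^{n!}\ge 2\,(n!)^{k!}$, and (b) $(n!)^{k!}>n^k$. Indeed, these two together yield
\[
(k!)^{n!}-(n!)^{k!}\;\ge\;(n!)^{k!}\;>\;n^k\;\ge\;n^k-k^n,
\]
which rearranges to the desired inequality. Estimate (b) follows from $(n!)^{k!}\ge (n!)^2>n^n\ge n^k$ for $k\ge 2$ and $n\ge 3$, the middle inequality $(n!)^2>n^n$ being a standard factorial-growth bound verifiable by induction starting at $n=3$.

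For (a), the key step is the algebraic identity
\[
\frac{(k!)^{n!}}{(n!)^{k!}}\;=\;\left(\frac{(k!)^{m-1}}{m}\right)^{k!},\qquad m:=\frac{n!}{k!}=(k+1)(k+2)\cdots n\ge k+1.
\]
When $k\ge 3$ one has $k!\ge 6$ and $m\ge 4$, so the inner quantity is at least $6^{m-1}/m\ge 6^3/4=54$, whence the whole ratio is at least $54^{k!}\ge 54^6$, vastly more than $2$. When $k=2$ and $n\ge 4$ one has $m=n!/2\ge 12$ and $2^{m-1}/m\ge 2^{11}/12>170$, so again (a) holds with great slack.

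The main obstacle is the single borderline case $(k,n)=(2,3)$, where the ratio $(k!)^{n!}/(n!)^{k!}=64/36$ falls just short of $2$ and the uniform estimate (a) fails by a whisker. This is settled by direct inspection: $2^6+2^3=72>45=6^2+3^2$. With this one pair checked by hand, the remaining work reduces to the clean factorial-growth estimates (a) and (b), and the theorem follows.
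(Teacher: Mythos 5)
Your proof is correct, but it follows a genuinely different route from the paper. The paper handles $k=2$ by a parity (2-adic) argument on $2^{n!}+2^n=n!^2+n^2$, and handles $k\ge 3$ by the Alzer--Luca-style arithmetic argument: from $k^k\mid (k!)^{n!},\,k^n,\,(n!)^{k!}$ it deduces $k\mid n$, writes $n=bk$, and derives a contradiction from the factorization $(n!)^{k!}-((k!)^c)^{k!}=(n!-(k!)^c)\sum_j (n!)^j(k!)^{c(k!-j-1)}$ together with $k^{k/(k-1)}<k!$. You instead prove the single uniform inequality $(k!)^{n!}+k^n>(n!)^{k!}+n^k$ for all $2\le k<n$, via the identity $(k!)^{n!}/(n!)^{k!}=\bigl((k!)^{m-1}/m\bigr)^{k!}$ with $m=n!/k!$, reducing everything to the two growth bounds $(k!)^{n!}\ge 2(n!)^{k!}$ (checked directly at the lone exception $(k,n)=(2,3)$) and $(n!)^2>n^n$. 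This is more elementary and uniform — no divisibility or parity input, and it even determines which side of the equation is larger — whereas the paper's argument stays closer to the arithmetic template of Alzer--Luca, which is the structure it is adapting. Two small points you leave implicit and should record: the monotonicity of $t\mapsto a^{t-1}/t$ (for $a\ge 2$), which justifies the bounds $6^{m-1}/m\ge 54$ for $m\ge 4$ and $2^{m-1}/m\ge 2^{11}/12$ for $m\ge 12$, and the induction for $(n!)^2>n^n$ ($n\ge 3$), which rests on $n^n\ge (n+1)^{n-1}$; both are routine.
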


\begin{thm}
Let $n$ and $k$ be positive integers. The equation
$$
(k!)^n-k^{n!}=(n!)^k-n^{k!}
$$
holds if and only if $k=n$ or $(k,n)=(1,2), (2,1)$.	
\end{thm}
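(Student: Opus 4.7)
The plan is to reduce the problem to a sign comparison. Since the equation is symmetric in $k$ and $n$, I may assume $k \le n$. The case $k = n$ is trivial, and the boundary case $k = 1 < n$ collapses the equation to $1 - 1 = n! - n$, i.e.\ $n! = n$, which forces $n \in \{1,2\}$ and yields the sporadic pair $(k,n) = (1,2)$; the pair $(2,1)$ appears by symmetry. For the remainder I assume $2 \le k < n$ and seek a contradiction.

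I rewrite the equation as
\[
k^{n!} - n^{k!} \;=\; (k!)^n - (n!)^k
\]
and compare the signs of the two sides. For the right-hand side, I show $(k!)^n < (n!)^k$, which is the classical monotonicity of $(m!)^{1/m}$: it reduces to $m\log(m+1) > \log(m!)$, equivalently $(m+1)^m > m!$, an immediate factor-by-factor comparison. For the left-hand side, I show $k^{n!} > n^{k!}$, equivalent (using $k \ge 2$, so $\log k > 0$) to $\tfrac{n!}{k!} > \tfrac{\log n}{\log k}$. Since $k < n$, the quotient $\tfrac{n!}{k!} = (k+1)(k+2)\cdots n$ is at least $n$, while for $k \ge 2$ one has $\tfrac{\log n}{\log k} \le \log_2 n < n$. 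Hence the left-hand side is strictly positive and the right-hand side is strictly negative, so equality is impossible in this range.

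I do not expect any genuine obstacle: both decisive inequalities follow from crude growth estimates, and the only subtlety is spotting the rearrangement that separates the ``doubly exponential'' terms $k^{n!}, n^{k!}$ from the ``merely exponential'' terms $(k!)^n, (n!)^k$, so that the two sides are forced into opposite signs. Isolating the case $k = 1$ (where the ratio $\log n/\log k$ is undefined) is essential but handled by the direct substitution above, so the proof should assemble cleanly.
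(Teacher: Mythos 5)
Your proof is correct and follows essentially the same route as the paper: both arguments rest on the two inequalities $(k!)^n<(n!)^k$ and $k^{n!}>n^{k!}$ for $k<n$, which the paper obtains from monotonicity of $(m!)^{1/m}$ and $m^{1/m}$ and then adds, while you rearrange the equation so the two sides have opposite signs. The only (cosmetic) difference is that your estimate $n!/k!\ge n>\log_2 n\ge \log n/\log k$ absorbs $k=2$ into the general case, whereas the paper treats $k=2$ by a separate sign argument since $m^{1/m}$ is decreasing only for $m\ge 3$.
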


 \begin{thm}
 	Let $n$ and $k$ be positive integers. The equation
 	$$
 (k!)^n+k^{n!}=(n!)^k+n^{k!}	
 	$$
 	if and only if $k=n$.	
 \end{thm}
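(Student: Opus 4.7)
The equation is symmetric under the swap $k \leftrightarrow n$, so I may assume $k \leq n$; the ``if'' direction is then trivial, and the task is to rule out $k < n$. The boundary case $k = 1 < n$ is immediate: the left-hand side equals $1 + 1 = 2$ while the right-hand side equals $n! + n \geq 4$.

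Assume now $2 \leq k < n$. The strategy is to show that the single summand $k^{n!}$ on the left already strictly exceeds the entire right-hand side, i.e.
\[
	k^{n!} > (n!)^k + n^{k!},
\]
which forces LHS $\geq k^{n!} >$ RHS. To prove this it suffices to establish the two separate inequalities $k^{n!} > 2(n!)^k$ and $k^{n!} > 2\, n^{k!}$; after taking logarithms these become
\[
	n!\,\log k > k \log n! + \log 2 \quad \text{and} \quad \frac{n!}{k!}\,\log k > \log n + \log 2,
\]
both of which follow from elementary estimates such as $\log n! \leq n \log n$ and $n!/k! \geq (k+1)^{n-k}$, with plenty of room to spare once $n! \gg k n \log n$.

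The main obstacle is that the sufficient condition $k^{n!} > 2(n!)^k$ just barely fails for the smallest pair $(k,n) = (2,3)$: there $k^{n!} = 64$ while $2(n!)^k = 72$. This isolated exception I would dispatch by direct verification, since $(2!)^3 + 2^{3!} = 8 + 64 = 72$ while $(3!)^2 + 3^{2!} = 36 + 9 = 45$, so the equation fails. After this single case is disposed of, the logarithmic estimates apply uniformly for the remaining pairs with $2 \leq k < n$, and the factorial growth of $n!$ in the exponent makes the gap widen rapidly. The overall shape of the argument parallels the proofs of Theorems 1--3.
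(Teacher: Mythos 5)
Your proposal is correct, but it follows a genuinely different route from the paper. The paper splits into $k=1$, $k=2$, $k\geq 3$: the case $k=2$ is settled by the parity (mod $2$) argument recycled from the proof of Theorem 1.2, and the case $k\geq 3$ is settled by proving exactly your key inequality $k^{n!}>(n!)^k+n^{k!}$, but via a double induction whose base case is Lemma 2.2 ($k^{(k+1)!}>(k+1)!^k+(k+1)^{k!}$ for $k\geq 3$) followed by induction on $n$, all with integer manipulations and binomial expansions, no logarithms. You instead prove the same dominant-term inequality uniformly for all $2\leq k<n$ by size estimates, which lets you dispense both with the induction and with the separate congruence argument for $k=2$, at the price of having to notice and check by hand the one small pair $(2,3)$ where the convenient sufficient condition $k^{n!}>2(n!)^k$ fails (you did, correctly: $72\neq 45$). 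Your plan buys brevity and uniformity; the paper's buys a purely elementary, log-free argument and reuses Lemma 2.2, which it needs anyway for Theorem 1.1. One bookkeeping caveat for you: the crudest chain ($\log k\geq\log 2$, $k\leq n-1$, $\log n!\leq n\log n$) does not quite close the first inequality at $n=4$ (it gives $24\log 2\approx 16.6$ versus $12\log 4+\log 2\approx 17.3$), so for small $n$ you must either keep $k$ and $\log k$ coupled (both pairs $(2,4)$, $(3,4)$ pass easily) or do a short finite verification before the asymptotic regime $n!\gg kn\log n$ takes over; this is a matter of care, not a flaw in the approach.
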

\noindent
We want to make a short comment regarding the proofs of the above assertions. Theorems 1.2 and 1.3 are proved using minor adoptations of the arguments in the proofs of the results of Alzer and Luca \cite{AL}. But it turned out that the proofs of Theorems 1.1 and 1.4 needed a double induction approach and was not so straight forward as we thought. We are not sure whether the proofs can be simplified. 
\section{Proof of Theorem 1.1}
\noindent
For the proof we need the following two lemmas.
\begin{lem}
	For $n>2$ one has $2^{n!}-2^n>n!^2$.
\end{lem}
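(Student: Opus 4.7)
The plan is to proceed by induction on $n \geq 3$. The inequality is extremely loose once $n$ is moderately large (since $2^{n!}$ grows doubly exponentially while $(n!)^2$ is only factorial), so I expect only the base case to be delicate; the inductive step should follow from very crude bounds.

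For the base case $n=3$, a direct numerical check suffices: $2^{3!}-2^3 = 64-8 = 56$ while $(3!)^2=36$, so $56>36$.

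For the inductive step, I would assume $2^{n!}-2^n>(n!)^2$ for some $n\geq 3$ and rewrite the hypothesis as $2^{n!}>(n!)^2+2^n$. Raising both sides to the $(n+1)$-st power gives
$$
2^{(n+1)!} \;=\; \bigl(2^{n!}\bigr)^{n+1} \;>\; \bigl((n!)^2+2^n\bigr)^{n+1} \;\geq\; (n!)^{2(n+1)}.
$$
The desired conclusion $2^{(n+1)!}-2^{n+1}>((n+1)!)^2=(n+1)^2(n!)^2$ therefore reduces to showing
$$
(n!)^{2(n+1)} - 2^{n+1} \;>\; (n+1)^2 (n!)^2,
$$
which, after dividing through by $(n!)^2$, becomes $(n!)^{2n} > (n+1)^2 + 2^{n+1}/(n!)^2$. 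The right-hand side is bounded by $(n+1)^2+1$ for every $n\geq 3$, whereas the left-hand side is already $6^6 = 46656$ at $n=3$ and grows super-exponentially thereafter, so the inequality holds with a huge margin.

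The one genuine subtlety is that one cannot afford to throw away too much in the base case: the crude simplification $2^{n!}-2^n\geq 2^{n!-1}$ turns the claim into $2^{n!-1}>(n!)^2$, which actually \emph{fails} at $n=3$ (since $2^5=32<36$), even though the target inequality barely survives by $56>36$. So I would keep the subtracted $2^n$ explicit at $n=3$ and only use the lossy estimate in the inductive step, where all slack is available.
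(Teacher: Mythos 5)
Your argument is correct. The inductive step is sound: from $2^{n!}>(n!)^2+2^n$ you get $2^{(n+1)!}=(2^{n!})^{n+1}>(n!)^{2(n+1)}$, and the reduced inequality $(n!)^{2n}>(n+1)^2+2^{n+1}/(n!)^2$ is easily made fully rigorous (e.g.\ $2^{n+1}\le (n!)^2$ for $n\ge 3$, and $(n!)^{2n}\ge (n!)^2\ge 4n^2>(n+1)^2+1$), so the two bounds you only assert with a remark are genuinely trivial to fill in. Your route differs from the paper's in organization rather than in spirit: the paper does not induct on the stated inequality directly, but instead proves by induction, for $n\ge 5$, the auxiliary bound $2^{(n-1)!}>2(n!)^2$, converts it into the lemma through the identity $2^{n!}-2^n=2^n\bigl(2^{(n-1)!}-1\bigr)$, and then checks $n=3$ and $n=4$ separately by hand. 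Both proofs rest on the same mechanism — raising $2^{m!}$ to the next integer to reach $2^{(m+1)!}$ and then discarding almost everything — but your direct induction starts already at $n=3$, avoids the auxiliary lemma and the extra small-case verifications, and your closing observation (that the lossy estimate $2^{n!}-2^n\ge 2^{n!-1}$ would fail at $n=3$, so the subtracted $2^n$ must be kept explicit in the base case) correctly identifies why the base case is the only delicate point.
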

\begin{proof}
	We first show by induction that
	$$
	\frac{2^{(n-1)!}}{n!^2}>2
	$$
	for $n\geq 5$. One verifies directly that the inequality holds for $n=5$. It follows
	$$
	2^{n!}=(2^{(n-1)!})^n>(2(n!)^2)^n=2^n((n!)^2)^n>2((n!)^2)^n>2(n+1)!^2
	$$	
	and this shows
	$$
	2^{(n-1)!}>2(n!)^2.
	$$
	This implies
	$$
	2^{(n-1)!}-n!^2> n!^2>1
	$$
	and therefore
	$$
	2^{(n-1)!}-1>n!^2.
	$$
	And since $2^n(2^{(n-1)!}-1)=2^{n!}-2^n$, we find for $n\geq 5$
	$$
	2^{n!}-2^n>n!^2.
	$$
	Now checking the cases $n=3$ and $n=4$ completes the proof of the lemma.
\end{proof}
\begin{lem}
	For $k\geq 3$ one has $k^{(k+1)!}>(k+1)!^k+(k+1)^{k!}$.
\end{lem}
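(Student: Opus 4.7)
The plan is to dominate each of the two summands on the right by $k^{(k+1)!}/2$, so that their sum is strictly less than $k^{(k+1)!}$. Concretely, I would prove, separately and for every $k \geq 3$, the two inequalities
$$2(k+1)^{k!} < k^{(k+1)!} \qquad \text{and} \qquad 2\,(k+1)!^{\,k} < k^{(k+1)!},$$
and then simply add them.

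For the first of these, I would factor $k^{(k+1)!} = (k^{k+1})^{k!}$, which reduces the claim to $\bigl(k^{k+1}/(k+1)\bigr)^{k!} > 2$. Since $k^{k+1} \geq 2(k+1)$ already for $k \geq 2$ and $k! \geq 6$ for $k \geq 3$, the left-hand side is at least $2^{6} = 64$. This step is comfortable because the gap between $k^{k+1}$ and $k+1$ is enormous.

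For the second, I would apply the coarse bound $(k+1)! \leq (k+1)^{k+1}$ to get $(k+1)!^{\,k} \leq (k+1)^{k(k+1)}$. Writing $(k+1)! = k(k+1)(k-1)!$, we have $k^{(k+1)!} = \bigl(k^{(k-1)!}\bigr)^{k(k+1)}$, so it is enough to show $k^{(k-1)!} \geq 2(k+1)$. For $k \geq 3$ one has $(k-1)! \geq 2$ and hence $k^{(k-1)!} \geq k^{2}$, while an elementary calculation gives $k^{2} \geq 2(k+1)$ for every $k \geq 3$. Raising to the $k(k+1)$-th power then yields $k^{(k+1)!} \geq 2(k+1)^{k(k+1)} \geq 2\,(k+1)!^{\,k}$, as desired. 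I do not foresee a serious obstacle here: each estimate is loose by a huge margin for all $k \geq 3$, and the main thing requiring care is simply keeping the bookkeeping of factorial exponents straight, in the spirit of the preceding lemma.
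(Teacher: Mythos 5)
Your proof is correct, but it takes a genuinely different route from the paper. The paper argues by induction on $k$: after checking $3^{4!}>4!^3+4^{3!}$ it expands $((k+1)!^k+(k+1)^{k!})^{k+2}$ binomially, keeps two of the terms, and compares them with $(k+2)!^{k+1}$ and $(k+2)^{(k+1)!}$, the latter via the monotonicity of $(1+1/(k+1))^{k+1}$ with limit $e$. You instead bound each summand on the right directly by half of the left-hand side: writing $k^{(k+1)!}=(k^{k+1})^{k!}$ together with $k^{k+1}\ge 2(k+1)$ and $k!\ge 6$ handles $(k+1)^{k!}$, while writing $k^{(k+1)!}=(k^{(k-1)!})^{k(k+1)}$ together with $k^{(k-1)!}\ge k^2\ge 2(k+1)$ and $(k+1)!^k\le (k+1)^{k(k+1)}$ handles $(k+1)!^k$; all of these estimates hold for $k\ge 3$, and the resulting inequalities are strict (your spare factors $2^{k!}$ and $2^{k(k+1)}$ are far larger than the $2$ you need), so adding the two bounds gives the lemma. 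Your argument avoids induction and the $e$-limit entirely and is shorter and more self-contained; what the paper's inductive proof buys is stylistic uniformity with the double-induction arguments used later for Theorems 1.1 and 1.4, but nothing in those later proofs depends on how this lemma is established, so your direct estimate could be substituted without any other changes.
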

\begin{proof}
	One easily verifies that $3^{4!}>4!^3+4^{3!}$. We now proceed by induction on $k$. For this we must show
	$$
	(k+1)^{(k+2)!}>(k+2)!^{(k+1)}+(k+2)^{(k+1)!}.
	$$
	Indeed, by induction hypothesis we find
	$$
	(k+1)^{(k+2)!}=((k+1)^{(k+1)!})^{(k+2)}>(k^{(k+1)!})^{(k+2)}>((k+1)!^k+(k+1)^{k!})^{(k+2)}.
	$$
	Notice that 
	$$
	((k+1)!^k+(k+1)^{k!})^{(k+2)}=\sum_{j=0}^{k+2}\binom{k+2}{j}(k+1)!^{k(k+2-j)}(k+1)^{k!j}
	$$
	and 
	$$
	\sum_{j=0}^{k+2}\binom{k+2}{j}(k+1)!^{k(k+2-j)}(k+1)^{k!j}>(k+2)(k+1)!^k(k+1)^{k!(k+1)}+(k+1)^{k!(k+2)}.
	$$
	This yields
	$$
	(k+1)^{(k+2)!}>(k+2)(k+1)!^k(k+1)^{k!(k+1)}+(k+1)^{k!(k+2)}.
	$$
	We now show $(k+1)^{k!(k+2)}>(k+2)^{(k+1)!}$ and $(k+2)(k+1)!^k(k+1)^{k!(k+1)}>(k+2)!^{(k+1)}$. Let us consider
	$$
	(k+1)^{k!(k+2)}=((k+1)^{k+2})^{k!}
	$$
	and 
	$$
	(k+2)^{(k+1)!}=((k+2)^{k+1})^{k!}.
	$$
	Since $k+1\geq 4$ and $(1+1/(k+1))^{k+1}$ is strictly increasing with limit being $e$, we conclude
	$$
	k+1>(1+\frac{1}{k+1})^{k+1}=(\frac{k+2}{k+1})^{k+1}.
	$$
	This shows
	$$
	(k+1)^{k+2}>(k+2)^{k+1}
	$$
	and hence
	$$
	((k+1)^{k+2})^{k!}>((k+2)^{k+1})^{k!}.
	$$
	Now we show $(k+2)(k+1)!^k(k+1)^{k!(k+1)}>(k+2)!^{(k+1)}$. Indeed, since $(k+1)^{k+1}>(k-j)(k+2)$ for $j=0,...,(k-1)$ and $k\geq 3$, we conclude 
	$$
	((k+1)^{k+1})^k>k!(k+2)^k
	$$
	and therefore
	$$
	((k+1)^{k+1})^{k!}>(k+1)k!(k+2)^k.
	$$
	Hence
	$$
	(k+1)^{k!(k+1)}>(k+1)!(k+2)^k.
	$$
	This gives
	$$
	(k+2)(k+1)^{k!(k+1)}>(k+2)(k+1)!(k+2)^{k}
	$$
	and finally
	$$
	(k+2)(k+1)!^k(k+1)^{k!(k+1)}>(k+2)(k+1)!^{k+1}(k+2)^{k}=(k+2)!^{k+1}.
	$$
	This proves that $k^{(k+1)!}>(k+1)!^k+(k+1)^{k!}$ for $k\geq 3$.
\end{proof}
\noindent
\textbf{proof of Theorem 1.1}:\\
\noindent
Obviously, if $k=n$ or $(k,n)=(1,2),(2,1)$ the equation $(k!)^{n!}-k^n=(n!)^{k!}-n^k$ is valid. Next, we show that if the equation holds with $k<n$, then $(k,n)=(1,2)$. We distinguish three cases.\\
\noindent
\emph{the case $k=1$}: \\
\noindent
the equation becomes 
$$
1-1=0=n!-n=n((n-1)!-1).
$$
Since $n>1$, we obtain $n=2$.\\  
\noindent
\emph{the case $k=2$}: \\
\noindent
Now the equation reduces to
$$
2^{n!}-2^n=n!^2-n^2.
$$

\noindent
Using Lemma 2.1, we find 
$$
2^{n!}-2^n>n!^2-n^2
$$
for $n>2$. Therfore, the equation $2^{n!}-2^n=n!^2-n^2$ has no solution for $n>2$.\\  
\noindent
\emph{the case $k=3$}: \\
\noindent 
We show that $k!^{n!}>n!^{k!}+k^n$ for $n>k\geq 3$. We use a double induction argument. To realize the induction start, we have to show that 
$$
k!^{(k+1)!}>(k+1)!^{k!}+k^{k+1}
$$
for $k\geq 3$. From Lemma 2.2 we know $k^{(k+1)!}>(k+1)!^k+(k+1)^{k!}$ for $k\geq 3$. Multiplying the inequality by $(k-1)!^{(k+1)!}$ gives
$$
k!^{(k+1)!}>(k-1)!^{(k+1)!}(k+1)!^k+(k-1)!^{(k+1)!}(k+1)^{k!}.
$$ 
Now we show $(k-1)!^{(k+1)!}(k+1)^{k!}>(k+1)!^{k!}$ and $(k-1)!^{(k+1)!}(k+1)!^k>k^{k+1}$. We start by proving the first inequality. Indeed, since $(k-1)!^k>k$ for $k\geq 3$, we find
$$
(k-1)!^{k+1}>k!
$$
and hence
$$
(k-1)!^{(k+1)!}>k!^{k!}.
$$
But this yields
$$
(k-1)!^{(k+1)!}(k+1)^{k!}>(k+1)!^{k!}.
$$
To show that $(k-1)!^{(k+1)!}(k+1)!^k>k^{k+1}$, we note that $(k+1)^k>k$ for $k\geq 3$. But this implies
$$
(k-1)!^{(k+1)!}(k-1)!^k(k+1)^k>k
$$
and therefore
$$
(k-1)!^{(k+1)!}(k+1)!^k>k^{k+1}.
$$
This realizes the induction start. To show that  $k!^{n!}>n!^{k!}+k^n$, we continue using induction on $n>k$. So we fix a $k\geq 3$ and obtain from the induction hypothesis
$$
k!^{(n+1)!}=(k!^{n!})^{n+1}>(n!^{k!}+k^n)^{n+1}=\sum_{j=0}^{n+1}\binom{n+1}{j}n!^{k!(n+1-j)}k^{nj}.
$$
Notice that
$$
\sum_{j=0}^{n+1}\binom{n+1}{j}n!^{k!(n+1-j)}k^{nj}>k^{n(n+1)}+(n+1)n!^{k!n}k^n> k^{n+1}+(n+1)n!^{k!n}k^n.
$$
It remains to show that $(n+1)n!^{k!n}k^n>(n+1)!^{k!}$. But this follows from $n!^{n-1}>(n+1)$. Since $n!^{n-1}>(n+1)$, we conclude
$$
n!^n>(n+1)!
$$
and this gives
$$
n!^{k!n}>(n+1)!^{k!}.
$$
But then obviously $(n+1)n!^{k!n}k^n>(n+1)!^{k!}$. This completes the proof.
\section{Proof of Theorem 1.2}
\noindent
We show that $(k!)^{n!}+k^n=(n!)^{k!}+n^k$ implies $k=n$. By symmetry, we may assume $n\geq k$. We consider again three cases.\\
\noindent
\emph{the case $k=1$}: \\
\noindent 
the equation than becomes 
$$
2=n!+n.
$$
But this gives $n=1$.\\
\noindent
\emph{the case $k=2$}: \\
\noindent
now the equation reduces to
$$
2^{n!}+2^n=n!^2+n^2=n^2((n-1)!^2+1).
$$
We can rewrite the left hand side and obtain
$$
2^n(2^{n!-n}+1)=2^{n!}+2^n=n^2((n-1)!^2+1).
$$
It follows that $n=2^ax$ for some $a\geq 1$ and some odd integer $x\geq 1$. Then we obtain
$$
2^{n-2a}(2^{n!-n}+1)=x^2((n-1)!^2+1).
$$
For $n\geq 3$, the right hand side is allways odd. The left hand side is even, except for the case $n=2a$. But the case $n=2a$ means $2^ax=2a$. And this happens only if $a=1$ and $x=1$. Therefore $n=2$, contradicting the assumption $n\geq 3$. This shows $n=2$.\\  
\noindent
\emph{the case $k\geq 3$}: \\
\noindent 
Since 
$$
k^k\mid (k!)^{n!}, \quad k^k\mid k^n \quad \textnormal{and} \quad k^k\mid (n!)^{k!},
$$
we conclude that $k^k\mid n^k$. This implies $k\mid n$. So let $n=bk$ for some $b\geq 1$. We now assume $b\geq 2$ and produce a contradiction. Since the sequence $\{n^{1/n}\}_{n=3}^{\infty}$ is strictly decreasing, we find $k^n-n^k>0$. Thus
$$
k^n-n^k=(n!)^{k!}-(k!)^{n!}=(n!)^{k!}-(k!)^{(bk)!}>0.
$$
Note that there is a positive integer $c$ such that $(bk)!=c\cdot k!$ and that, since we assumed $b\geq 2$, we have $c>b$. Now we have
$$
(n!)^{k!}>(k!)^{(bk)!}=(k!)^{c\cdot k!}.
$$
It follows $n!>(k!)^c$ and therefore $n!-(k!)^c\geq 1$. Now we get 
$$
k^{bk}=k^n>k^n-n^k=(n!)^{k!}-((k!)^{c})^{k!}=(n!-(k!)^c)\sum_{j=0}^{k!-1}(n!)^j(k!)^{c\cdot (k!-j-1)}>(k!)^{c\cdot (k!-1)}.
$$
And since $k!-1>k-1$ and $c>b$, we obtain
$$
k^{bk}>(k!)^{c\cdot (k!-1)}>(k!)^{b(k-1)}.
$$
This gives $k^{k/(k-1)}>k!$. But this is not true, since for $k\geq 3$ we have
$$
k^{k/(k-1)}\leq k^{3/2}=k\sqrt{k}\leq k(k-1)\leq k!.
$$
Hence $b=1$ and $k=n$. This completes the proof.
\section{Proof of Theorem 1.3}
\noindent
Obviously, if $k=n$ or $(k,n)=(1,2),(2,1)$, then $(k!)^n-k^{n!}=(n!)^k-n^{k!}$ is valid. Next, we show that if the equation holds with $k<n$, then we obtain $(k,n)=(1,2)$. We consider three cases.\\
\noindent
\emph{the case $k=1$}: \\
\noindent 
the equation becomes $1-1=0=n!-n$. Since $n>1$, this implies $n=2$.\\
\noindent
\emph{the case $k=2$}: \\
\noindent 
the equation reduces to
$$
2^n-2^{n!}=n!^2-n^2.
$$
Since $n\geq 3$, the left hand side is allways negative. On the other hand, the right hand side is positive. Hence there is no solution for $k=2$.\\
\noindent
\emph{the case $k\geq 3$}: \\
\noindent 
The sequences $\{(n!)^{1/n}\}_{n=1}^{\infty}$ and $\{-n^{1/n}\}_{n=3}^{\infty}$ are strictly increasing. This gives for $n>k\geq 3$:
$$
(n!)^{k}>(k!)^{n} \quad \textnormal{and} \quad -n^{k!}>-k^{n!}
$$
Adding up the inequalities yields
$$
(n!)^{k}-n^{k!}>(k!)^{n}-k^{n!}.
$$
This completes the proof.
\section{Proof of Theorem 1.4}
\noindent
We show that $(k!)^n+k^{n!}=(n!)^k+n^{k!}$ implies $k=n$. Again, we consider three cases.\\
\noindent
\emph{the case $k=1$}: \\
\noindent 
the equation becomes $1+1=2=n!+n$. This yields $n=1$.\\
\noindent
\emph{the case $k=2$}: \\
\noindent 
the equation becomes
$$
2^n+2^{n!}=n!^2+n^2.
$$
But this equation was treated in the proof of Theorem 1.2 on page 5 and it was shown that $n=2$.\\
\noindent
\emph{the case $k\geq 3$}: \\
\noindent 
We show that $k^{n!}>n!^k+n^{k!}$ for $n>k\geq 3$. We use a double induction argument. To realize the induction start, we first show that for $k\geq 3$ one has
$$
k^{(k+1)!}>(k+1)!^k+(k+1)^{k!}.
$$
But this is exactly the content of Lemma 2.2.

\noindent
To show that $k^{n!}>n!^k+n^{k!}$, we continue using induction on $n>k$. So we fix a $k\geq 3$ and perform the induction step. We have to show that
$$
k^{(n+1)!}>(n+1)!^k+(n+1)^{k!}
$$
From the induction hypothesis we obtain
$$
k^{(n+1)!}=(k^{n!})^{n+1}>(n!^k+n^{k!})^{n+1}=\sum_{j=0}^{n+1}\binom{n+1}{j}n!^{k(n+1-j)}n^{k!j}.
$$
Note that 
$$
\sum_{j=0}^{n+1}\binom{n+1}{j}n!^{k(n+1-j)}n^{k!j}>(n+1)n!^kn^{k!n}+n^{k!(n+1)}.
$$
We show $(n+1)n!^kn^{k!n}>(n+1)!^k$ and $n^{k!(n+1)}>(n+1)^{k!}$. Let us start with the first inequality. Since $n^n>n+1$ for $n\geq 2$, we get
$$
(n^n)^{k!}>(n^n)^{k-1}>(n+1)^{k-1}.
$$
But this implies
$$
(n+1)n^{k!n}>(n+1)^k
$$
and therefore
$$
(n+1)n!^kn^{k!n}>(n+1)!^k.
$$
The second inequality follows easily from the fact that $n^{n+1}>n+1$ for $n\geq 2$. Because this implies directly $(n^{n+1})^{k!}>(n+1)^{k!}$. This completes the proof of $k^{n!}>n!^k+n^{k!}$. But this finally shows $k^{n!}+k!^n>n!^k+n^{k!}$ for $n>k\geq 3$. 

\vspace{0.3cm}
\noindent
{\tiny HOCHSCHULE FRESENIUS UNIVERSITY OF APPLIED SCIENCES 40476 D\"USSELDORF, GERMANY.}\\
E-mail adress: sasa.novakovic@hs-fresenius.de\\

\end{document}